\newtheorem{theo}{Theorem}
\newtheorem{lemma}{Lemma}[section]
\newtheorem{coro}[lemma]{Corollary}
\newtheorem{definition}[lemma]{Definition}
\newtheorem{claim}[lemma]{Claim}
\newcommand{\E}{\mathbb{E}}
\newcommand{\ZZ}{\mathbb{Z}}
\newcommand{\Z}{\ZZ}
\newcommand{\ith}{i\text{-th}}
\newcommand{\s}[1]{\left\lvert #1 \right\rvert}
\newcommand{\floor}[1]{\left\lfloor{#1}\right\rfloor}
\newcommand{\ceil}[1]{\left\lceil #1 \right\rceil}
\DeclareMathOperator{\twr}{twr}
\newcommand{\eps}{\epsilon}
\newcommand{\e}{\epsilon}
\newcommand{\sm}{\setminus}
\newcommand{\sprod}[2]{\langle{#1},{#2}\rangle}
\newcommand{\ip}[1]{\langle{#1}\rangle}
\date{}
\title{An Improved Lower Bound for Arithmetic Regularity}
\author{
Kaave Hosseini\thanks{CSE department, UC San Diego. Email: \texttt{skhossei@cse.ucsd.edu}. Supported by NSF award 1350481.}
\and
Shachar Lovett\thanks{CSE department, UC San Diego. Email: \texttt{slovett@cse.ucsd.edu}. Supported by NSF award 1350481.}
\and
Guy Moshkovitz\thanks{School of Mathematics, Tel Aviv University, Tel Aviv, Israel 69978.  Email: {\tt guymosko@tau.ac.il}. Supported in part by ISF grant 224/11.}
\and
Asaf Shapira\thanks{School of Mathematics, Tel Aviv University, Tel Aviv, Israel 69978. Email: {\tt asafico@tau.ac.il}. Supported in part by ISF Grant 224/11 and a Marie-Curie CIG Grant 303320.}
}
\begin{document}
\maketitle
\begin{abstract}
The arithmetic regularity lemma due to Green [GAFA 2005] is an analogue of the famous Szemer{\'e}di regularity lemma in graph theory. It shows that for any abelian group
$G$ and any bounded function $f:G \to [0,1]$, there exists a subgroup $H \le G$ of bounded index such that, when restricted to most cosets of $H$, the function $f$ is pseudorandom in the sense that all its nontrivial Fourier coefficients are small. Quantitatively, if one wishes to obtain that for $1-\eps$ fraction
of the cosets, the nontrivial Fourier coefficients are bounded by $\eps$, then Green shows that $|G/H|$ is bounded by a tower of twos of height $1/\eps^3$. He also
gives an example showing that a tower of height $\Omega(\log 1/\eps)$ is necessary. Here, we give an improved example, showing that a tower of height $\Omega(1/\eps)$ is necessary.
\end{abstract}

\section{Introduction}

As an analogue of Szemer\'edi's regularity lemma in graph theory~\cite{Szemeredi78},
Green~\cite{Green05} proposed an arithmetic regularity lemma for abelian groups.
Given an abelian group $G$ and a bounded function $f:G \to [0,1]$, Green showed that one can find a subgroup $H \le G$ of bounded index, such that when restricted to most cosets of $H$, the function $f$ is pseudorandom in the sense that all of its nontrivial Fourier coefficients are small. Quantitatively, the index of $H$ in $G$ is bounded by a tower of twos of height polynomial in the error parameter. The goal of this note is to provide an example showing that these bounds are essentially tight. This strengthens a previous example due to Green~\cite{Green05} which shows that a tower of height logarithmic in the error parameter is necessary; and makes the lower bounds in the arithmetic case analogous to these obtained in the graph case~\cite{Gowers97}.

We restrict our attention in this paper to the group $G=\ZZ_2^n$, and note that our construction can be generalized to groups of bounded torsion in an obvious way. We first make some basic definitions.
Let $A$ be an affine subspace (that is, a translation of a vector subspace) of $\ZZ_2^n$ and let $f:A\to[0,1]$ be a function.
The Fourier coefficient of $f$ associated with $\eta \in \ZZ_2^n$ is
$$\widehat{f}(\eta) = \frac{1}{\s{A}}\sum_{x\in A} f(x)(-1)^{\sprod{x}{\eta}} = \E_{x\in A} [f(x)(-1)^{\sprod{x}{\eta}}] \;.$$
Any subspace $H \le \ZZ_2^n$ naturally determines a partition of $\ZZ_2^n$ into affine subspaces
$$\ZZ_2^n/H = \{ H+g \,:\, g \in \ZZ_2^n\} \;.$$
The number $\s{\ZZ_2^n/H}= 2^{n-\dim H}$ of translations  is called the \emph{index} of $H$.

\subsection{Arithmetic regularity and the main result}

For an affine subspace $A=H+g$ of $\ZZ_2^n$,
where $H \le \ZZ_2^n$ and $g \in \ZZ_2^n$,
we say that a function $f:A\to[0,1]$ is \emph{$\e$-regular} if
all its nontrivial Fourier coefficients are bounded by $\e$, that is,
$$\max_{\eta \notin H^\perp} \big\lvert \widehat{f}(\eta) \big\rvert \le \e \;.$$
Note that a trivial Fourier coefficient $\eta \in H^{\perp}$ satisfies $\lvert \widehat{f}(\eta) \rvert = \s{\E_{x\in A} f(x)}$.
Henceforth, for any $f:\ZZ_2^n\to[0,1]$ we denote by $f|_A:A\to[0,1]$ the restriction of $f$ to $A$.

\begin{definition}[$\e$-regular subspace]
Let $f:\ZZ_2^n\to[0,1]$.
A subspace $H \le \ZZ_2^n$ is \emph{$\e$-regular}
for 
$f$ if $f|_A$ is $\e$-regular for at least $(1-\e)\s{\ZZ_2^n/H}$ translations $A$ of $H$.
\end{definition}


Green~\cite{Green05} proved that any 
bounded function
has an $\e$-regular subspace $H$ of bounded index, that is, whose index
depends only on $\e$ (equivalently, $H$ has bounded codimension). In the following, 
$\twr(h)$ is a tower of twos of height $h$; formally, $\twr(h):=2^{\twr(h-1)}$ for a positive integer $h$, and $\twr(0)=1$.

\begin{theo}[Arithmetic regularity lemma in $\ZZ_2^n$, Theorem~2.1 in~\cite{Green05}]\label{theo:Green}
For every $0<\e<\frac12$ there is $M(\e)$ such that every function $f:\ZZ_2^n \to [0,1]$ has an $\e$-regular subspace of index at most $M(\e)$.
Moreover, $M(\e) \le \twr(\lceil 1/\e^3 \rceil)$.
\end{theo}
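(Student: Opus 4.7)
The plan is to prove Theorem~\ref{theo:Green} by the now-standard energy increment argument, in analogy with Szemer\'edi's original proof in the graph case. Let $f:\ZZ_2^n\to[0,1]$. For a subspace $H\le\ZZ_2^n$ define the \emph{energy}
$$
\mathcal{E}(H) := \E_{A\in\ZZ_2^n/H}\bigl(\E_{x\in A} f(x)\bigr)^2 \in [0,1].
$$
By Parseval applied on each coset of $H$, whenever $H'\le H$ the gain $\mathcal{E}(H')-\mathcal{E}(H)$ equals the average over cosets $A$ of $\sum \s{\widehat{f|_A}(\eta)}^2$, where the sum ranges over representatives of the nontrivial classes in $H'^{\perp}/H^\perp$; in particular, energy is monotone under refinement.

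The key step is an increment lemma: if $H$ is not $\eps$-regular for $f$, then there is a subspace $H'\le H$ with $\mathcal{E}(H')\ge \mathcal{E}(H)+\eps^3$. By hypothesis, more than an $\eps$-fraction of cosets $A$ admit some character $\eta_A\notin H^\perp$ with $\s{\widehat{f|_A}(\eta_A)}>\eps$. Taking $H'$ to be the subspace of $\ZZ_2^n$ with annihilator $H'^\perp := \Span(H^\perp \cup \{\eta_A : A \text{ is bad}\})$, each such $\eta_A$ represents a nontrivial class in $H'^\perp/H^\perp$, so the identity above yields the required increment of at least $\eps\cdot \eps^2 = \eps^3$.

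I would then iterate, starting from $H_0=\ZZ_2^n$: while $H_i$ is not $\eps$-regular, replace it by the subspace $H'$ produced above. Since $\mathcal{E}$ lies in $[0,1]$, the process halts after some $k\le \lceil 1/\eps^3\rceil$ steps with $H_k$ being $\eps$-regular.

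The main obstacle, and the reason the bound is tower-type rather than merely exponential, lies in controlling the \emph{codimension} $d_i := n - \dim H_i$ along this iteration. At step $i$ the construction of $H_{i+1}$ adjoins at most one new vector to $H_i^\perp$ per bad coset of $H_i$, and there are at most $2^{d_i}$ cosets, giving $d_{i+1} \le d_i + 2^{d_i}$. Iterating this recursion from $d_0=0$ produces $d_k \le \twr(O(k))$, and combining with $k \le \lceil 1/\eps^3\rceil$ plus a careful accounting of constants yields an index of at most $\twr(\lceil 1/\eps^3\rceil)$, as stated.
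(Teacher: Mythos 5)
This theorem is not proved in the paper; it is quoted verbatim from Green's 2005 paper, so there is no in-paper proof to compare against. Your proposal reconstructs the standard energy-increment argument, which is essentially how Green proves it, and the argument is sound: the coset-Parseval identity
\[
\E_{A'\subset A}\bigl(\E_{x\in A'}f\bigr)^2 = \sum_{[\eta]\in H'^\perp/H^\perp}\bigl\lvert \widehat{f|_A}(\eta)\bigr\rvert^2
\]
is correct (one checks that averaging over $H'$-cosets kills exactly the frequencies outside $H'^\perp$), it gives monotonicity of $\mathcal{E}$ under refinement and the increment $\mathcal{E}(H')-\mathcal{E}(H)>\eps\cdot\eps^2$ when more than an $\eps$-fraction of cosets are bad, and boundedness of $\mathcal{E}$ in $[0,1]$ halts the iteration after at most $\lceil 1/\eps^3\rceil$ steps. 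The codimension recursion $d_{i+1}\le d_i+2^{d_i}$ with $d_0=0$ is also right and gives $d_k\le\twr(k)$ by an easy induction. The one place you gloss over is the final constant: your recursion gives index $2^{d_k}\le 2^{\twr(k)}=\twr(k+1)$, i.e.\ a tower of height $\lceil 1/\eps^3\rceil+1$ rather than $\lceil 1/\eps^3\rceil$; matching the stated constant exactly requires a slightly sharper accounting (e.g.\ noting the very first step costs only $d_1\le 1$ and tightening the induction), but this is purely bookkeeping and does not affect the substance of the argument.
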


A lower bound on $M(\e)$ of about $\twr(\log_2(1/\e))$
was given in the same paper~\cite{Green05}, following the lines of Gowers' lower bound on the order of $\e$-regular partitions of graphs~\cite{Gowers97}.
While Green's lower bound implies that $M(\e)$ indeed has a tower-type growth,
it is still quite far from the upper bound in Theorem~\ref{theo:Green}.

Our main result here nearly closes the gap between the lower and upper bounds on $M(\e)$, showing that $M(\e)$ is a tower of twos of height at least linear in $1/\e$.
Our construction follows the same initial setup as in~\cite{Green05}, but will diverge from that point on. Our proof is inspired by the recent simplified lower bound proof for the graph regularity lemma in~\cite{MoSh13} by
a subset of the authors.

\begin{theo}\label{theo:main}
For every $\e>0$
it holds that $M(\e) \ge \twr(\floor{1/16\e})$.
\end{theo}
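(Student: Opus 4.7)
The plan is to build, for $n$ large and $k:=\floor{1/16\e}$, a function $f:\ZZ_2^n\to[0,1]$ whose every $\e$-regular subspace $H$ satisfies $\dim H^\perp\ge\twr(k-1)$, so that its index $2^{\dim H^\perp}\ge\twr(k)$. The construction uses a nested chain $\ZZ_2^n=V_0>V_1>\cdots>V_k$ of subspaces whose codimensions $c_i:=n-\dim V_i$ grow tower-exponentially, with $c_i\approx 2^{c_{i-1}}$, so that $c_k\ge\twr(k-1)$.

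I will define $f$ iteratively. For each level $i\in\{1,\dots,k\}$ and each coset $A$ of $V_{i-1}$, pick a character $\eta_A^{(i)}\in V_i^\perp\setminus V_{i-1}^\perp$, arranged so that the collection $\{\eta_A^{(i)}\}_A$ projects onto a basis of the quotient $V_i^\perp/V_{i-1}^\perp$ (and more robustly, so that any $(1-O(\e))$-fraction still spans a subspace of nearly full dimension, taking redundant copies if needed). Set $p_i(x):=(-1)^{\sprod{x}{\eta_A^{(i)}}}$ for $x\in A$ and define
\[ f(x):=\tfrac{1}{2}+\sum_{i=1}^{k}\alpha_i\,p_i(x),\qquad \alpha_i:=\tfrac{1}{4k}. \]
Since $\sum_i|\alpha_i|\le\tfrac{1}{4}$ and $|p_i|\le1$, $f$ takes values in $[\tfrac{1}{4},\tfrac{3}{4}]\subset[0,1]$; crucially $\alpha_i\ge 4\e$, and the factor $16$ in the statement comes from this budgeting (one factor of $4$ to bound $\|f\|_\infty$, another to leave a gap above $\e$ in the Fourier bound).

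The core calculation will be the identity $\widehat{f|_A}(\eta_A^{(i)})=\alpha_i$ for every level $i$ and every coset $A$ of $V_{i-1}$. The contribution of $p_j$ with $j<i$ vanishes because $p_j$ is a fixed character on the super-coset of $V_{j-1}\supseteq V_{i-1}$ containing $A$, and its product with $(-1)^{\sprod{\cdot}{\eta_A^{(i)}}}$ is a nontrivial character on $A$ (since $\eta_A^{(i)}\notin V_{i-1}^\perp$); the contribution of $p_j$ with $j>i$ vanishes because $p_j$ restricted to each sub-coset of $V_{j-1}$ inside $A$ is a character whose product with $(-1)^{\sprod{\cdot}{\eta_A^{(i)}}}$ is nontrivial on that sub-coset. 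Only $j=i$ survives, contributing exactly $\alpha_i$.

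The final step is an inductive argument showing $V_k^\perp\subseteq H^\perp$ (up to negligible slack). Assuming $H\le V_{i-1}$, each coset $A$ of $V_{i-1}$ partitions into cosets $B$ of $H$, and $\widehat{f|_A}(\eta_A^{(i)})$ is the average of $\widehat{f|_B}(\eta_A^{(i)})$ over those $B$. Since the $\e$-regularity forces $|\widehat{f|_B}(\eta_A^{(i)})|\le\e$ for most $B$ (unless $\eta_A^{(i)}\in H^\perp$), and since $\alpha_i\ge 4\e$, a counting argument shows that $\eta_A^{(i)}\in H^\perp$ for at least a $(1-O(\e))$-fraction of cosets $A$; by the robust choice of witnesses this lifts to $V_i^\perp\subseteq H^\perp$, closing the induction. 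The main obstacle is propagating the $O(\e)$-fraction of bad cosets across $k\approx 1/\e$ levels without the error exploding — even a constant-factor blowup per level would render the induction vacuous — and it is here that the coset-counting argument from~\cite{MoSh13} is essential.
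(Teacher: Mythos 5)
Your construction is essentially the same as the paper's: the paper also partitions $\ZZ_2^n$ into $s=\floor{1/16\e}$ blocks of tower-growing sizes, assigns to each coset of the ``first $i-1$ blocks'' a nonzero vector $\xi_i(\cdot)$ with the property that any $3/4$-fraction of them spans the $i$th block, and sets $f(x)=\frac{1}{s}\sum_i B_i(x)$ where $B_i(x)=\mathbbm{1}[\sprod{x^i}{\xi_i(x)}=0]$. Unpacking $B_i=\frac12\bigl((-1)^{\sprod{x^i}{\xi_i(x)}}+1\bigr)$, this is your $f=\frac12+\sum_i\alpha_i p_i$ with $\alpha_i=\frac{1}{2s}$.

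Where you diverge is in the proof that no small-index $H$ is $\e$-regular, and here your plan has a genuine gap. You propose an induction: assume $H\le V_{i-1}$ and deduce $H\le V_i$, iterated $k$ times. Two problems. First, the statement ``$\e$-regularity forces $|\widehat{f|_B}(\eta_A^{(i)})|\le\e$ for most $B$'' is not available for a fixed $A$: the $\e$-regular budget of at most $\e$-fraction irregular cosets of $H$ is a \emph{global} bound, and for any particular coset $A$ of $V_{i-1}$ the irregular $B$'s inside it could be a much larger fraction. What the global bound actually gives, via a first-moment argument, is that the fraction $T_i$ of cosets $A$ with $\eta_A^{(i)}\notin H^\perp$ is bounded by a \emph{constant} (roughly $T_i\le\e/(\alpha_i-\e)\le 1/3$), not by $O(\e)$ as you claim. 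Second, and this is the real obstacle you yourself flag: to close the induction cleanly you need \emph{exact} full spanning from the surviving witnesses (so that $V_i^\perp\subseteq H^\perp$, i.e.\ $H\le V_i$, with no slack), but your own constant budget only gives $T_i\le 1/3$, which is not below the $1/4$ threshold that the $3/4$-spanning property of Claim~\ref{claim:quasirandom} requires. Fixing this needs a tighter accounting (observe $|\widehat{f|_B}(\eta)|\le 1/4$ for nontrivial $\eta$ since $f\in[\frac14,\frac34]$, which improves $T_i$ to roughly $1/12$), and you must insist on full spanning rather than ``nearly full dimension''; otherwise the slack does compound across $k\approx 1/\e$ levels, exactly as you fear.

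The paper sidesteps the induction entirely with a one-shot argument: take the minimal block index $i$ at which $H$ has a nonzero coordinate. Minimality gives you, for free, the exact analogue of your inductive hypothesis ``$H\le V_{i-1}$'' (blocks $j<i$ are constant on each coset of $H$), with no accumulation to worry about. Then, instead of trying to conclude $\gamma_g\in H^\perp$, the paper averages $\widehat{f|_{H+g+w}}(\gamma_g)$ over $w\in W$ (the span of the later blocks) and shows this average equals exactly $\frac{1}{2s}$ (Claim~\ref{claim:fourier}); a Markov-type argument (Corollary~\ref{cor:fourier}) then yields $>1/4s$ fraction of $w$ with a large coefficient, and combining with Claim~\ref{claim:Bsmall} (at least $1/4$ of translations have $\gamma_g\notin H^\perp$) directly produces a $>1/16s\ge\e$ fraction of irregular cosets, contradicting $\e$-regularity. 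This is both shorter and sharper than the inductive route, and it is why the paper never needs the coset-counting machinery you reference.
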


\subsection{A variant of Theorem~\ref{theo:main} for binary functions}

One can also deduce from Theorem~\ref{theo:main} a similar bound for $\e$-regular \emph{sets}, that is, for binary functions $f:\ZZ_2^n\to\{0,1\}$.
For this, all we need is the following easy probabilistic argument.

\begin{claim}\label{claim:weightedSet}
Let $\tau>0$ and $f:\ZZ_2^n\to[0,1]$. There exists a binary function $S:\ZZ_2^n\to\{0,1\}$ satisfying, for every affine subspace $A$ of $\ZZ_2^n$ of size $\s{A} \ge 4n^2/\tau^2$ and
any vector $\eta\in \ZZ_2^n$, that
$$
\big\lvert \widehat{S|_A}(\eta)-\widehat{f|_A}(\eta) \big\rvert \le \tau.
$$
\end{claim}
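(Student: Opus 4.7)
The plan is to construct $S$ probabilistically: define $S(x)$ independently for each $x \in \ZZ_2^n$ by setting $S(x) = 1$ with probability $f(x)$ and $S(x) = 0$ otherwise. Then for any fixed affine subspace $A$ and any $\eta \in \ZZ_2^n$, the random variable $\widehat{S|_A}(\eta)$ is an average of $|A|$ independent random variables (one for each $x \in A$), with expectation exactly $\widehat{f|_A}(\eta)$, since $\E[S(x)] = f(x)$.

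Because each term $S(x)(-1)^{\sprod{x}{\eta}}/|A|$ takes values in $\{-1/|A|,0,1/|A|\}$, which is an interval of length at most $2/|A|$, I would apply Hoeffding's inequality to bound, for a fixed $(A,\eta)$,
$$\Pr\!\left[\,\big\lvert\widehat{S|_A}(\eta)-\widehat{f|_A}(\eta)\big\rvert > \tau\,\right] \le 2\exp(-|A|\tau^2/2).$$
Under the hypothesis $|A| \ge 4n^2/\tau^2$, the right-hand side is at most $2\exp(-2n^2)$.

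It remains to take a union bound over all pairs $(A,\eta)$. A crude count suffices: each subspace of $\ZZ_2^n$ is spanned by at most $n$ generators, so there are at most $2^{n^2}$ subspaces and hence at most $2^{n^2+n}$ affine subspaces; multiplying by $2^n$ choices of $\eta$ and by the per-pair failure probability gives a total of at most $2^{n^2+2n+1}\exp(-2n^2)$, which is less than $1$ for all $n$ large enough to admit an affine subspace of size $\ge 4n^2/\tau^2$ in the first place (for smaller $n$ the claim is vacuous). Hence some realization of $S$ satisfies the required bound simultaneously for all $(A,\eta)$.

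The only even mildly delicate point is the counting step — making sure the double-exponential-looking failure probability $\exp(-2n^2)$ comfortably beats the number of affine subspaces of $\ZZ_2^n$ together with the number of characters; the factor $4$ in the size hypothesis $|A|\ge 4n^2/\tau^2$ is precisely chosen so that this union bound closes.
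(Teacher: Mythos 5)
Your proof matches the paper's: same random rounding of $f$ to get $S$, same application of Hoeffding's inequality per pair $(A,\eta)$, and same union bound over affine subspaces and characters. The only cosmetic difference is your slightly looser count of affine subspaces ($2^{n^2+n}$ versus the paper's $2^{n^2}$), which still leaves the union bound with ample room, so the argument is correct and essentially identical.
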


\begin{proof}
Choose $S:\ZZ_2^n\to\{0,1\}$ randomly by setting $S(x)=1$ with probability $f(x)$, independently for each $x\in \ZZ_2^n$,
Let $A,\eta$ be as in the statement. The random variable $$\widehat{S|_A}(\eta) = \frac{1}{\s{A}} \sum_{x\in A} S(x)(-1)^{\sprod{x}{\eta}}$$
is an average of $\s{A}$ mutually independent random variables taking values in $[-1,1]$, and its expectation is $\widehat{f|_A}(\eta)$. By Hoeffding's bound, the probability that
$\big\rvert \widehat{S|_A}(\eta)-\widehat{f|_A}(\eta) \big\rvert > \tau$ is smaller than
$$2\exp(-\tau^2\s{A}/2) \le 2^{-2n^2 + 1} \;.$$
The number of affine subspaces over $\Z_2^n$ can be trivially bounded by $2^{n^2}$, the number of sequences of $n$ vectors in $\Z_2^n$. Hence, the number of pairs $(A,\eta)$ is bounded by $2^{n^2+n}$.
The claim follows by the union bound.
\end{proof}

Applying Claim~\ref{claim:weightedSet} with $\tau=\e/2$ (say) implies that if $f:\ZZ_2^n\to[0,1]$ has no $\e$-regular subspace of index smaller than $\twr(\floor{1/16\e})$ then, provided $n$ is sufficiently large in terms of $\e$, there is $S:\ZZ_2^n\to\{0,1\}$ that has no $\e/2$-regular
subspace of index smaller than $\twr(\floor{1/16\e})$.

\section{Proof of Theorem~\ref{theo:main}}

\subsection{The Construction}

To construct a function witnessing the lower bound in Theorem~\ref{theo:main} we will use pseudo-random spanning sets.

\begin{claim}\label{claim:quasirandom}
Let $V$ be a vector space over $\ZZ_2$ of dimension $d$.
Then there is a set of $8d$ nonzero vectors in $V$ such that any subset of $\frac34$ of them spans $V$.
\end{claim}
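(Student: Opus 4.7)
The plan is a textbook probabilistic method: sample $v_1,\ldots,v_{8d} \in V\setminus\{0\}$ independently and uniformly at random, and show that with positive probability every $6d$-element sub-collection spans $V$.

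The crucial simplification is that a collection of vectors in $V$ fails to span iff it is contained in some hyperplane $W$ (a codimension-$1$ subspace), and the number of hyperplanes in $V$ is small: exactly $2^d - 1$, indexed by the nonzero linear functionals $V \to \ZZ_2$. So the ``bad'' event reduces to the existence of a single hyperplane $W$ that contains at least $6d$ of the $v_i$. For any fixed hyperplane $W$, a uniformly random nonzero vector lies in $W$ with probability $(2^{d-1}-1)/(2^d-1) < 1/2$, since $W$ has exactly $2^{d-1}-1$ nonzero elements. Letting $X_W$ count how many of the $v_i$ land in $W$, we get $\E[X_W] < 4d$, and Hoeffding's inequality yields
$$
\Pr[X_W \ge 6d] \le \exp\!\left(-\frac{2\cdot(2d)^2}{8d}\right) = e^{-d}.
$$

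Taking a union bound over the $2^d - 1$ hyperplanes bounds the total failure probability by $(2^d-1)e^{-d} < (2/e)^d < 1$, where the last inequality uses $e>2$. Hence some realization of the sample has the desired property. The quantitatively delicate point, and really the only thing to check, is the matching of constants: we need the Chernoff exponent to beat $\ln 2$, which is precisely what sampling $8d$ vectors (rather than, say, $2d$ or $4d$) buys us. If distinctness of the sampled vectors is a concern for very small $d$, one can either interpret ``set'' as a multiset or resample conditional on distinctness, which is essentially free once $d$ is moderately large since $|V\setminus\{0\}| = 2^d - 1 \gg 8d$.
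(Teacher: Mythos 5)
Your proof is correct and is essentially identical to the paper's: sample $8d$ uniform nonzero vectors, bound by Hoeffding/Chernoff the probability that a fixed codimension-one subspace captures $6d$ of them, and union bound over the at most $2^d$ such subspaces to get failure probability below $(2/e)^d < 1$. The only addition beyond the paper is your (sensible) remark about distinctness versus multisets, which the paper glosses over but which is harmless in the application since the claim is invoked only for $d = d_i$ with $i > 3$, hence $d \ge 256$.
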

\begin{proof}
Choose random vectors $v_1,\ldots,v_{8d}\in V\sm\{0\}$ independently and uniformly. Let $U$ be a subspace of $V$ of dimension $d-1$.
The probability that a given $v_i$ lies in $U$
is at most $\frac12$. By Chernoff's bound, the probability that more than $6d$ of our vectors $v_i$ lie in $U$ is smaller than $\exp(-2(2d)^2/8d) = \exp(-d)$. By the union bound, the probability that there exists a subspace $U$ of dimension $d-1$ for which the above holds is at most $2^d\exp(-d)< 1$. This completes the proof.
\end{proof}

We now describe a function $f:\ZZ_2^n\to[0,1]$ which, as we will later prove, has no $\e$-regular subspace of small index.
Henceforth set $s=\floor{1/16\e}$.
Furthermore, let $d_i$ be the following sequence of integers of tower-type growth:
$$d_{i+1} = \begin{cases}
	2^{D_i} & \text{if } i=1,2,3\\
	2^{D_i-3} & \text{if } i > 3
\end{cases} \qquad \text{where } D_i=\sum_{j=1}^{i} d_j \text{ and } D_0=0 \;.$$
Note that the first values of $d_i$ for $i\ge 1$ are $1,2,8,2^8,2^{264},$ etc.,\ and it is not hard to see that $d_i \ge \twr(i-1)$ for every $i\ge 1$.
Set $n=D_s$ $(\ge \twr(s-1))$.
For $x\in\ZZ_2^n$, partition its coordinates into $s$ blocks of sizes $d_1,\ldots,d_{s}$, and identify $x=(x^1,\ldots,x^s) \in \ZZ_2^{d_1+\cdots+d_s} = \ZZ_2^{n}$.

Let $1 \le i \le s$.
Bijectively associate with each $v\in \ZZ_2^{D_{i-1}}=\ZZ_2^{d_1+\cdots+d_{i-1}}$ a nonzero vector $\xi_i(v) \in \ZZ_2^{d_{i}}$ such that the set of vectors $\{\xi_i(v) : v\in \ZZ_2^{D_{i-1}} \}$ has the property that any subset of $\frac34$ of its elements spans $\ZZ_2^{d_i}$. The existence of such a set, which is a subset of size $2^{D_{i-1}}$ in a vector space of dimension $d_{i}$, follows from
Claim~\ref{claim:quasirandom} when $i > 3$, since then $2^{D_{i-1}} = 8d_{i}$.
When $i \le 3$ the existence of such a set is trivial since $\ceil{(3/4)i} = i$, hence any basis would do (and we take
$2^{D_{i-1}}=d_{i}$).
With a slight abuse of notation, if $x\in\Z_2^n$ we write $\xi_i(x)$ for $\xi_i((x^1,\ldots,x^{i-1}))$.

We define our function $f:\ZZ_2^n\to[0,1]$ as
$$f(x) = \frac{\s{\{1 \le i \le s \,:\, \sprod{x^i}{\xi_i(x)} = 0 \}}}{s} \;.$$
The following is our main technical lemma, from which Theorem~\ref{theo:main} immediately follows.
\begin{lemma}\label{lemma:main}
The only $\e$-regular subspace for $f$ is the zero subspace $\{0\}$.
\end{lemma}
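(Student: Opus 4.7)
The plan is to prove the contrapositive: every nonzero subspace $H\le\ZZ_2^n$ fails to be $\e$-regular for $f$. Set $i^{**}:=\min\{i:\pi_i(H)\ne 0\}$, the smallest block on which $H$ acts nontrivially (where $\pi_i$ denotes projection to block $i$), and let $V:=\pi_{i^{**}}(H)\ne 0$. By definition $h^j=0$ for every $h\in H$ and $j<i^{**}$, so each coset $A=H+g$ fixes the first $i^{**}-1$ blocks of $x$; this lets me assign to each coset a well-defined (representative-independent) witness character
\[
\eta_A\,:=\,(0,\ldots,0,\xi_{i^{**}}(g^{1:i^{**}-1}),0,\ldots,0),
\]
supported only on block $i^{**}$. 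Since $\eta_A\in H^\perp$ iff $\xi_{i^{**}}(g^{1:i^{**}-1})\in V^\perp$, and $V^\perp$ is a proper subspace of $\ZZ_2^{d_{i^{**}}}$, the pseudorandom property of $\{\xi_{i^{**}}(v)\}$ (Claim~\ref{claim:quasirandom} for $i^{**}\ge 4$, the basis case for $i^{**}\le 3$) ensures $\eta_A\notin H^\perp$ for a constant fraction $c_0>0$ of cosets.

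For such a \emph{good} coset I would expand $f=\tfrac{1}{2}+\tfrac{1}{2s}\sum_{j=1}^{s} F_j$, with $F_j(x)=(-1)^{\langle x^j,\xi_j(x^{1:j-1})\rangle}$, to get (since $\eta_A\notin H^\perp$ kills the constant piece)
\[
\widehat{f|_A}(\eta_A)\;=\;\frac{(-1)^{\langle g,\eta_A\rangle}}{2s}\sum_{j=1}^{s} V_j, \qquad V_j\,:=\,\E_{h\in H}\!\big[F_j(g+h)(-1)^{\langle h,\eta_A\rangle}\big].
\]
The crucial algebraic identity is at $j=i^{**}$: since $h^{1:i^{**}-1}=0$ for every $h\in H$, the inner $\xi_{i^{**}}$ always equals $\alpha:=\xi_{i^{**}}(g^{1:i^{**}-1})$, and the two occurrences of $\langle h^{i^{**}},\alpha\rangle$ cancel, giving $V_{i^{**}}=\pm 1$. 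For $j<i^{**}$, $F_j(g+h)=F_j(g)$ is constant on $A$, so $V_j=F_j(g)\cdot\E_{h\in H}[(-1)^{\langle h^{i^{**}},\alpha\rangle}]=0$ precisely because $\alpha\notin V^\perp$.

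The remaining task is to show that $\sum_{j>i^{**}} V_j$ does not typically cancel the main term. Exploiting the injectivity and pseudorandomness of the $\xi_j$ together with the uniformity of the ``fresh'' coordinate $g^j$, one obtains $\E_g[V_j]=0$ and $\E_g[|V_j|^2]\lesssim 1/|V|$; cross-correlations $\E_g[V_j V_l]$ for distinct $j,l>i^{**}$ vanish because $g^{\max(j,l)}$ is fresh for exactly one of them. Hence $\E_g\big|\!\sum_{j>i^{**}} V_j\big|^2\lesssim s/|V|$, and a second-moment (Paley--Zygmund) argument applied to $(V_{i^{**}}+\sum_{j>i^{**}} V_j)^2$---whose mean is at least $1$---yields $|\widehat{f|_A}(\eta_A)|\ge \Omega(1/s)>\e$ on more than an $\e$-fraction of cosets, a contradiction. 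The main obstacle I foresee is calibrating this anti-concentration step quantitatively: the $L^2$-norm of $\sum_{j>i^{**}}V_j$ can be as large as $\sqrt{s/|V|}$, potentially much larger than the $\pm 1$ signal, so cancellation is \emph{a priori} possible. A tighter second- (or fourth-) moment estimate, or a careful decomposition of $H$ according to its projections to blocks above $i^{**}$ (separating its ``pure level-$i^{**}$'' part from its upper tail), is likely needed to pin down the required $\Omega(1)$-or-better fraction of good cosets on which cancellation fails.
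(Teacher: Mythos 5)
Your setup mirrors the paper's proof up through the decomposition of the Fourier coefficient: you find the minimal block $i^{**}$ on which $H$ acts nontrivially, define the same coset-determined character $\eta_A=\gamma_g$, and show that the $j<i^{**}$ terms vanish while the $j=i^{**}$ "signal" term contributes $\pm 1$. This is exactly the content of Claim~\ref{claim:Bsmall} and of equations~(\ref{eq:j<i})--(\ref{eq:j=i}) in the paper (your $V_{i^{**}}=\pm1$ is equivalent, after unfolding $B_i=\frac12(F_i+1)$, to $\widehat{B_i|_A}(\gamma_g)=\frac12$).

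The final step, however, is a genuine gap, and you correctly flag it yourself. You try to show that the "noise" $\sum_{j>i^{**}}V_j$ does not typically cancel the $\pm 1$ signal by a second-moment / Paley--Zygmund argument over a typical $g$. This is unlikely to close: $V=\pi_{i^{**}}(H)$ can have dimension one, so the claimed bound $\E_g[|V_j|^2]\lesssim 1/|V|$ gives only $O(1)$ per term, the $L^2$-norm of $\sum_{j>i^{**}}V_j$ can then be as large as $\Theta(\sqrt{s})$, and the resulting Paley--Zygmund fraction is of order $1/s^2$, one factor of $s$ short of what is needed against $\e\approx 1/16s$. A tighter fourth-moment estimate along these lines looks delicate and is not supplied.

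The paper avoids anti-concentration entirely by choosing what to average over. It fixes the prefix $(g^1,\ldots,g^{i^{**}})$ --- and hence the character $\gamma_g$ --- and averages only over translations $w\in W$, the span of the blocks above $i^{**}$. Because $W$ makes the fresh coordinate $x^j$ uniform in each term with $j>i^{**}$, one gets $\E_{w\in W}[\widehat{B_j|_{H+g+w}}(\gamma_g)]=0$ \emph{exactly} (equation~(\ref{eq:j>i})), so $\E_{w\in W}[\widehat{f|_{H+g+w}}(\gamma_g)]=\frac{1}{2s}$ on the nose (Claim~\ref{claim:fourier}). Since Fourier coefficients of a $[0,1]$-valued function are trivially bounded by $1$, a one-line reverse Markov argument (Corollary~\ref{cor:fourier}) gives that the coefficient exceeds $1/4s$ for more than a $1/4s$-fraction of $w$. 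Combined with Claim~\ref{claim:Bsmall}, this produces more than a $\frac14\cdot\frac{1}{4s}=\frac{1}{16s}\ge\e$ fraction of bad cosets without any variance control. The idea you are missing is precisely this: trade pointwise anti-concentration for an exact first-moment computation over the sub-group $W$ combined with the trivial $L^\infty$ bound on Fourier coefficients.
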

\begin{proof}[Proof of Theorem~\ref{theo:main}]
The index of $\{0\}$ is $\s{\ZZ_2^n/\{0\}} = 2^n \ge \twr(s) = \twr(\floor{1/16\e})$.
\end{proof}

\subsection{Proof of Lemma~\ref{lemma:main}}

Let $H$ be an $\e$-regular subspace for $f$, and assume towards contradiction that $H \neq \{0\}$. Let $1 \le i\le s$ be minimal such that there exists $v \in H$ for which $v^i \neq 0$.
For any $g \in \ZZ_2^n$, let
$$\gamma_g = (0,\ldots,0,\xi_i(g),0,\ldots,0) \in \ZZ_2^n $$ where only the $\ith$ component is nonzero.
We will show that for more than an $\e$-fraction of the translations $H+g$ of $H$ it holds
that $\gamma_g \notin H^{\perp}$ and
$$
\widehat{f|_{H+g}}(\gamma_g) > \e \;.
$$
This will imply that $H$ is not $\e$-regular for $f$, thus completing the proof.

First, we argue that $\gamma_g \notin H^{\perp}$ for a noticeable fraction of $g \in \Z_2^n$. We henceforth let $B=\{g \in \Z_2^n: \gamma_g \in H^{\perp}\}$ be the set of "bad" elements.
\begin{claim}\label{claim:Bsmall}
$\s{B} \le \frac34 \s{\Z_2^n}$.
\end{claim}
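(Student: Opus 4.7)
The plan is to translate the condition $\gamma_g \in H^\perp$ into a condition on $\xi_i(g)$ alone, and then exploit the spanning property of $S_i := \{\xi_i(v) : v \in \Z_2^{D_{i-1}}\}$ guaranteed by Claim~\ref{claim:quasirandom}. First, since $\gamma_g$ is supported only on the $\ith$ block, for every $h \in H$ we have $\sprod{h}{\gamma_g} = \sprod{h^i}{\xi_i(g)}$. Setting $W := \{h^i : h \in H\} \le \Z_2^{d_i}$, the projection of $H$ onto its $\ith$ block, we see that $\gamma_g \in H^\perp$ if and only if $\xi_i(g) \in W^\perp$. The existence of some $v \in H$ with $v^i \ne 0$ (guaranteed by the choice of $i$) forces $W$ to be a nontrivial subspace of $\Z_2^{d_i}$, so $W^\perp$ is a \emph{proper} subspace of $\Z_2^{d_i}$.

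Next I would count. Since $v \mapsto \xi_i(v)$ is a bijection from $\Z_2^{D_{i-1}}$ onto $S_i$, and $\xi_i(g)$ depends only on the first $i-1$ blocks $(g^1,\ldots,g^{i-1})$, each element of $S_i$ is attained by $\xi_i(g)$ for exactly $2^{n-D_{i-1}}$ values of $g \in \Z_2^n$. Therefore $\s{B}/\s{\Z_2^n} = \s{S_i \cap W^\perp}/\s{S_i}$, reducing the problem to bounding the fraction of $S_i$ that lies inside the proper subspace $W^\perp$.

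For the final step I would invoke the defining property of $S_i$: any $\tfrac34$-fraction of its elements spans $\Z_2^{d_i}$. Since $W^\perp$ is a proper subspace, no subcollection of $S_i \cap W^\perp$ can span $\Z_2^{d_i}$, which forces $\s{S_i \cap W^\perp} \le \tfrac34 \s{S_i}$; combining with the previous step yields $\s{B} \le \tfrac34 \s{\Z_2^n}$ as required. I do not anticipate any substantive obstacle---the bound is essentially dictated by the construction---and the only delicate bookkeeping is identifying the right subspace $W$ of $\Z_2^{d_i}$ so that membership in $H^\perp$ becomes membership in $W^\perp$, which is immediate once one observes that $\gamma_g$ is nonzero only in its $\ith$ block.
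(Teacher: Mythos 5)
Your argument is correct and essentially matches the paper's: both hinge on the observation that $\xi_i(g)$ is determined by $(g^1,\ldots,g^{i-1})$, that $\xi_i$ is a bijection onto the set $S_i$, and that the $3/4$-spanning property of $S_i$ bounds the fraction of $S_i$ lying in any proper subspace. The only cosmetic difference is that the paper works one-sidedly with the single hyperplane $\{u : \langle u, v^i\rangle = 0\}$ coming from a fixed $v\in H$ with $v^i\neq0$, whereas you characterize $B$ exactly via the full projection $W=\{h^i:h\in H\}$ and its annihilator --- an equivalent (if slightly sharper) formulation. One small caution: the paper later reuses the symbol $W$ for a different subspace, so you would want to rename yours to avoid a clash.
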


\begin{proof}
If $g \in B$ then $\ip{\xi_i(g),v^i}=0$. Hence, $\{\xi_i(g): g \in B\}$ does not span $\Z_2^{d_{i}}$. By the construction of $\xi_i$, this means that $\{(g^1,\ldots,g^{i-1}): g \in B\}$ accounts to at most $\frac34$ of the elements in $\Z_2^{D_{i-1}}$, and hence $\s{B} \le \frac34\s{\Z_2^n}$.
\end{proof}

Next, we argue that typically $\widehat{f|_{H+g}}(\gamma_g)$ is large. Let $W \le \Z_2^n$ be the subspace spanned by the last $s-i$ blocks, that is, $W=\{w \in \Z_2^n: w^1=\ldots=w^i=0\}$. Note that for any $g \in \Z_2^n, w \in W$ we have $\gamma_{g+w}=\gamma_g$. In particular, $g+w \in B$ if and only if $g \in B$.

\begin{claim}\label{claim:fourier}
Fix $g \in \Z_2^n$ such that $\gamma_g \notin H^{\perp}$. Then
$$
\E_{w \in W}\left[\widehat{f|_{H+g+w}}(\gamma_{g})\right] = \frac{1}{2s} \;.
$$
\end{claim}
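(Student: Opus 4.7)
The plan is to decompose $f$ as a sum of indicator functions and compute the Fourier coefficient contribution of each block separately. Write $f = \frac{1}{s}\sum_{j=1}^{s} f_j$ where $f_j(x) = \mathbbm{1}[\sprod{x^j}{\xi_j(x)} = 0]$. By linearity of the Fourier transform, I would reduce the claim to showing
\begin{equation*}
\E_{w\in W}\left[\widehat{f_j|_{H+g+w}}(\gamma_g)\right] = \begin{cases} 0 & j\ne i,\\ \frac{1}{2} & j=i. \end{cases}
\end{equation*}
After pulling out the character value at the coset representative, each Fourier coefficient becomes $(-1)^{\sprod{g+w}{\gamma_g}}\cdot \E_{h\in H}[f_j(h+g+w)(-1)^{\sprod{h}{\gamma_g}}]$; and note that $\sprod{w}{\gamma_g}=\sprod{w^i}{\xi_i(g)}=0$ since $w^i=0$, so the $w$-dependence of the prefactor disappears.

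For $j<i$, the minimality of $i$ guarantees $h^1=\cdots=h^{i-1}=0$ for every $h\in H$, and $W$'s first $i$ blocks are zero by definition, so $f_j(h+g+w)$ depends only on $g^1,\ldots,g^j$ and is \emph{constant} over $h\in H,w\in W$. The resulting coefficient equals a constant times $\E_{h\in H}(-1)^{\sprod{h}{\gamma_g}}$, which vanishes because $\gamma_g\notin H^\perp$. For $j=i$, the key observation is that $\xi_i(h+g+w)=\xi_i(g)$ (it depends only on the first $i-1$ blocks, which are fixed), so $f_i(h+g+w) = \frac12(1+(-1)^{\sprod{h^i+g^i}{\xi_i(g)}})$. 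Plugging in, the first term again vanishes by $\gamma_g\notin H^\perp$, and the cross term produces $(-1)^{\sprod{g^i}{\xi_i(g)}}\cdot (-1)^{\sprod{g}{\gamma_g}}/2 = 1/2$ (the two signs cancel because $\sprod{g}{\gamma_g}=\sprod{g^i}{\xi_i(g)}$).

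The main subtlety is the case $j>i$: here both $f_j$ and $\xi_j$ genuinely depend on the random $w$. My plan is to condition on all coordinates of $w$ except $w^j$. Given this conditioning, $\xi_j(h+g+w)$ is determined and, crucially, nonzero by construction of the $\xi_j$'s; then $w^j$ is uniform and independent over $\ZZ_2^{d_j}$, so $\sprod{h^j+g^j+w^j}{\xi_j(h+g+w)}$ is a uniformly random bit. This yields $\E_{w\in W}[f_j(h+g+w)] = \frac12$ for every $h\in H$, and the coefficient again reduces to a multiple of $\E_{h\in H}(-1)^{\sprod{h}{\gamma_g}}=0$.

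Summing over $j$ from $1$ to $s$ and dividing by $s$ gives the claimed value $\frac{1}{2s}$. I expect the $j>i$ case to be the one requiring the most care, since one must verify that the randomness of $w^j$ really does act independently of $\xi_j(h+g+w)$; this is where the specific choice of the decomposition (with $\xi_j$ depending only on earlier blocks) pays off.
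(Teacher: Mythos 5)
Your proof is correct and follows essentially the same route as the paper: the same decomposition of $f$ into block indicators, the same three-way case analysis on $j$ vs.\ $i$, and the same mechanisms (constancy for $j<i$; cancellation giving $\tfrac12$ for $j=i$ via $\gamma_g\notin H^\perp$; averaging over the free block $w^j$ with $\xi_j\neq 0$ for $j>i$). Your bookkeeping via the coset representative is slightly more explicit than the paper's, which works directly with $\E_{x\in A}$ and $\E_{x\in H+W+g}$, but the ideas coincide.
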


\begin{proof}
Write $f(x) = \frac{1}{s} \sum_{j=1}^s B_j(x)$ where $B_j(x):\Z_2^n\to\{0,1\}$ is the characteristic function for the set of vectors $x$ satisfying $\sprod{x^j}{\xi_j(x)} = 0$. Hence, for any affine subspace $A$ in $\Z_2^n$,
\begin{equation}\label{eq:linearity}
\widehat{f|_A}(\gamma_g) = \frac{1}{s} \sum_{j=1}^s \widehat{B_j|_A}(\gamma_g)\;.
\end{equation}
Set $A=H+g+w$ for an arbitrary $w \in W$.
We next analyze the Fourier coefficient $\widehat{B_j|_A}(\gamma_g)$ for each $j \le i$, and note that in these cases we have $\xi_j(x) = \xi_j(g)$ for any $x \in A$.
First, if $j<i$ then for every $x\in A$ we have $x^j = g^j$, which implies that $B_j|_{A}$ is constant. Since a nontrivial Fourier coefficient of a constant function equals $0$, we have
\begin{equation}\label{eq:j<i}
\widehat{B_j|_A}(\gamma_g) = 0, \qquad \forall j<i.
\end{equation}
Next, for $j=i$,
write $B_i|_A(x) = \frac12((-1)^{\sprod{x^i}{\xi_i(x)}}+1)$.
Since $\ip{x,\gamma_g}=\ip{x^i,\xi_i(x)}$, we have
\begin{equation}\label{eq:j=i}
\widehat{B_i|_A}(\gamma_g) = \E_{x \in A} \left[ \frac12((-1)^{\ip{x^i,\xi_i(x)}}+1) \cdot (-1)^{\ip{x^i,\xi_i(x)}} \right] =
\E_{x \in A} \left[ B_i(x) \right] = \frac12 \;.
\end{equation}
Finally, for $j>i$ we average over all $w \in W$.
Let $H+W$ be the subspace spanned by $H,W$.
Writing $B_j(x) = \frac12((-1)^{\sprod{x^j}{\xi_j(x)}}+1)$,
the average Fourier coefficient is
$$
\E_{w \in W} \E_{x \in H+g+w} \left[ B_j(x) (-1)^{\ip{x^i,\xi_i(x)}} \right] = \frac12\E_{x \in H+W+g} \left[ (-1)^{\ip{x^i,\xi_i(g)}+\ip{x^j,\xi_j(x)}} \right].
$$
Note that for every fixing of $x^1,\ldots,x^{j-1}$, we have that $x^j$ is uniformly distributed in $\Z_2^{d_j}$ (due to $W$), and that $(-1)^{\ip{x^i,\xi_i(g)}}$ is constant. Since $\xi_j(x)\neq 0$, we conclude that
\begin{equation}\label{eq:j>i}
\E_{w \in W} \left[ \widehat{B_j|_{H+g+w}}(\gamma_g) \right] = 0, \qquad \forall j>i.
\end{equation}
The proof now follows by substituting~(\ref{eq:j<i}),~(\ref{eq:j=i}) and~(\ref{eq:j>i}) into~(\ref{eq:linearity}).
\end{proof}

Since $\widehat{f|_{H+g+w}}(\gamma_g) \le 1$, we infer (via a simple averaging argument) the following corollary.

\begin{coro}\label{cor:fourier}
Fix $g \in \Z_2^n$ such that $\gamma_g \notin H^{\perp}$. Then for more than $1/4s$ fraction of the elements $w \in W$,
$$
\widehat{f|_{H+g+w}}(\gamma_g) > \frac{1}{4s} \;.
$$
\end{coro}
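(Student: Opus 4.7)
The plan is to deduce Corollary~\ref{cor:fourier} from Claim~\ref{claim:fourier} by a straightforward averaging argument in the spirit of Markov's inequality. Fix $g \in \Z_2^n$ with $\gamma_g \notin H^\perp$, and write $X_w := \widehat{f|_{H+g+w}}(\gamma_g)$ for $w \in W$. Since $f$ takes values in $[0,1]$, each $X_w$ lies in $[-1,1]$, and we have the key identity $\E_{w \in W}[X_w] = 1/(2s)$ from Claim~\ref{claim:fourier}.

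Let $p$ denote the fraction of $w \in W$ for which $X_w > 1/(4s)$. The strategy is to upper-bound $\E_{w \in W}[X_w]$ by splitting according to whether $X_w$ exceeds $1/(4s)$ or not, using the trivial bound $X_w \le 1$ on the first set and $X_w \le 1/(4s)$ on the second. This yields
$$
\frac{1}{2s} \;=\; \E_{w \in W}[X_w] \;\le\; p \cdot 1 + (1-p) \cdot \frac{1}{4s} \;=\; \frac{1}{4s} + p\left(1 - \frac{1}{4s}\right).
$$
Solving for $p$ gives $p \ge \frac{1}{4s-1} > \frac{1}{4s}$, which is exactly the desired conclusion.

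There is no real obstacle here, as the bound $X_w \le 1$ is immediate from $f$ taking values in $[0,1]$ (so that the character-weighted average has absolute value at most $1$), and the rest is elementary algebra. The only care needed is the choice of threshold $1/(4s)$: it is selected so that both the "fraction" bound and the "value" bound on the right-hand side equal $1/(4s)$, thus matching each other and summing to at most the target $1/(2s)$ only with strict slack, which is what forces the strict inequality $p > 1/(4s)$.
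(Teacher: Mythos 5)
Your proof is correct and matches the paper's approach: the paper simply notes that the corollary follows from Claim~\ref{claim:fourier} and the bound $\widehat{f|_{H+g+w}}(\gamma_g) \le 1$ ``via a simple averaging argument,'' and your Markov-type splitting with threshold $1/(4s)$ is precisely that argument, correctly yielding the strict inequality $p \ge 1/(4s-1) > 1/(4s)$.
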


We are now ready to conclude the proof of Lemma~\ref{lemma:main}. Partition $\Z_2^n$ into translations of $W$, and recall that $\gamma_g$ depends just on the translation $g+W$. By Claim~\ref{claim:Bsmall}, for at least $\frac14$ of the translations, $\gamma_g \notin H^{\perp}$. By Corollary~\ref{cor:fourier}, in each such translation, more than $1/4s$-fraction of the elements $g+w$ satisfy
$\widehat{f|_{H+g+w}}(\gamma_g) > 1/4s$.
Since $1/16s \ge \e$, the subspace $H$ cannot be $\e$-regular for $f$.


\end{document}